\newcommand{\cG}{\mathcal{G}}
\newcommand{\G}{\mathcal{G}}            % Lie groupoid
\numberwithin{equation}{section}
\newtheorem{theorem}{Theorem}[section]
\newtheorem{lemma}[theorem]{Lemma}
\newtheorem{proposition}[theorem]{Proposition}
\newtheorem{corollary}[theorem]{Corollary}
\theoremstyle{definition}
\newtheorem{definition}[theorem]{Definition}
\newtheorem{remark}[theorem]{Remark}
\begin{document}

\title{The transverse density bundle and modular classes of Lie groupoids}

% author one information
\author{Marius Crainic}
\address{Mathematical Institute, Utrecht University,
The Netherlands}
\email{m.crainic@uu.nl}

% author two information
\author{Jo\~ao Nuno Mestre}
\address{Centre for Mathematics, University of Porto, Portugal}
%\curraddr{MPIM, Vivatsgasse 7, 53111 Bonn, Germany}
\email{jnmestre@gmail.com}

\begin{abstract} In this note we revisit the notions of transverse density bundle and of modular classes of Lie algebroids and Lie groupoids; in particular, we point out that one should use the transverse density bundle $\mathcal{D}_{A}^{\textrm{tr}}$ instead of $Q_A$, which is the representation that is commonly used when talking about modular classes. One of the reasons for this is that, as we will see, $Q_A$ is not really an object associated to the stack presented by a Lie groupoid (in general, it is not a representation of the groupoid!). 
\end{abstract}

\maketitle
%\textit{Mathematics Subject Classification} (2010). 58H05, 22A22.\newline
%\textit{Keywords:} Lie groupoids, differentiable stacks, transverse measures.

%\setcounter{tocdepth}{2}
%\tableofcontents

%%%%%%%%%%%%%%%%%%%%%%%%%%%
%%%%%%%%%%%%%%%%%%%%%%%%%%%
%%%%%%%%%%%%%%%%%%%%%%%%%%%
%%%%%%%%%%%%%%%%%%%%%%%%%%%
%%%%%%%%%%%%%%%%%%%%%%%%%%%
%%%%%%%%%%%%%%%%%%%%%%%%%%%
%%%%%%%%%%%%%%%%%%%%%%%%%%%
%%%%%%%%%%%%%%%%%%%%%%%%%%%
%%%%%%%%%%%%%%%%%%%%%%%%%%%
%%%%%%%%%%%%%%%%%%%%%%%%%%%
\section{Introduction}\label{intro}
%%%%%%%%%%%%%%%%%%%%%%%%%%%
%%%%%%%%%%%%%%%%%%%%%%%%%%%
%%%%%%%%%%%%%%%%%%%%%%%%%%%
%%%%%%%%%%%%%%%%%%%%%%%%%%%
%%%%%%%%%%%%%%%%%%%%%%%%%%%
%%%%%%%%%%%%%%%%%%%%%%%%%%%
%%%%%%%%%%%%%%%%%%%%%%%%%%%
%%%%%%%%%%%%%%%%%%%%%%%%%%%
%%%%%%%%%%%%%%%%%%%%%%%%%%%
%%%%%%%%%%%%%%%%%%%%%%%%%%%

We revisit the transverse density bundle $\mathcal{D}_{A}^{\textrm{tr}}$, a representation that is canonically associated to any Lie groupoid \cite{measures_stacks}, and its relation to the notions of modular classes of Lie groupoids and Lie algebroids. These are characteristic classes of certain 1-dimensional representations \cite{VanEst, QA, mehta-modular}, that we recall and discuss in this note.

The definition of the modular class of a Lie algebroid always comes with the slogan, inspired by various examples, that it is ``the obstruction to the existence of a transverse measure''. Here we would like to point out that the transverse density bundle $\mathcal{D}_{A}^{\textrm{tr}}$ and our discussions make this slogan precise. In particular, we point out that the canonical representation $Q_A$  \cite{QA} which is commonly used in the context of modular classes of Lie algebroids should actually be replaced by $\mathcal{D}_{A}^{\textrm{tr}}$, especially when passing from Lie algebroids to Lie groupoids.

\ \

%%%%%%%%%%%%%%%%%%%%%%
%%%%%%%%%%%%%%%%%%%%%%
%%%%%%%%%%%%%%%%%%%%%%
%%%%%%%%%%%%%%%%%%%%%%
%%%%%%%%%%%%%%%%%%%%%%
%%%%%%%%%%%%%%%%%%%%%%
\paragraph{\textbf{Notation and conventions:}}
%%%%%%%%%%%%%%%%%%%%%%
%%%%%%%%%%%%%%%%%%%%%%
%%%%%%%%%%%%%%%%%%%%%%
%%%%%%%%%%%%%%%%%%%%%%
%%%%%%%%%%%%%%%%%%%%%%
%%%%%%%%%%%%%%%%%%%%%%

Throughout the paper $\cG$ denotes a Lie groupoid over $M$, with source and target maps denoted by $s$ and $t$, respectively. For background material on Lie groupoids, Lie algebroids, and their representations we refer to \cite{ieke_mrcun}.   
We only consider Lie groupoids having all $s$-fibers of the same dimension or, equivalently, whose algebroid $A$ has constant rank.
Actually, all vector bundles in this paper are assumed to be of constant rank.

%%%%%%%%%%%%%%%%%%%%%%%%%%%%
%%%%%%%%%%%%%%%%%%%%%%%%%%%%%
%%%%%%%%%%%%%%%%%%%%%%%%%%%%
%%%%%%%%%%%%%%%%%%%%%%%%%%%%
%%%%%%%%%%%%%%%%%%%%%%%%%%%%
%%%%%%%%%%%%%%%%%%%%%%%%%%%%
\section{Transverse density bundles}
%%%%%%%%%%%%%%%%%%%%%%%%%%%%
%%%%%%%%%%%%%%%%%%%%%%%%%%%%%
%%%%%%%%%%%%%%%%%%%%%%%%%%%%
%%%%%%%%%%%%%%%%%%%%%%%%%%%%
%%%%%%%%%%%%%%%%%%%%%%%%%%%%
%%%%%%%%%%%%%%%%%%%%%%%%%%%%

\subsection{Volume, orientation and density bundles}\label{sub-dens-bundle}

First of all, note that any group homomorphism $\delta: GL_r\rightarrow \mathbb{R}^*$ allows us to associate to any $r$-dimensional vector space a canonical 1-dimensional vector space
\[ L_{\delta}(V):= \{\xi: \textrm{Fr}(V)\rightarrow \mathbb{R}\ :\  \xi(e\cdot A)= \delta(A) \xi(e) \ \textrm{for\ all} \ e\in \textrm{Fr}(V),\ A\in GL_r\},\]
where $\textrm{Fr}(V)= \textrm{Isom}(\mathbb{R}^r, V)$ is the space of frames on $V$, endowed with the standard (right) action of $GL_r$. The cases that are of interest for us are:
\begin{itemize}
\item $\delta= \textrm{det}$, for which we obtain the top exterior power $\Lambda^{\textrm{top}}V^*$.
\item $\delta= \textrm{sign}\circ \textrm{det}$, for which we obtain the \textbf{orientation space} $\mathfrak{o}_{V}$ of $V$.
\item $\delta= |\textrm{det}|$ which defines the \textbf{space $\mathcal{D}_{V}$ of densities} of $V$. More generally, for $l\in \mathbb{Z}$, $\delta= |\textrm{det}|^l$ defines the \textbf{space $\mathcal{D}_{V}^{l}$ of $l$-densities} of $V$.
\end{itemize}
It is clear that there is a canonical isomorphism:
\[ \mathcal{D}_V\otimes \mathfrak{o}_{V}\cong \Lambda^\mathrm{top} V^*,.\]
When $V= L$ is 1-dimensional, $\mathcal{D}_{L}$ is also denoted by $|L|$; so, in general,
\[ \mathcal{D}_{V}= |\Lambda^\mathrm{top} V^*|,\]
which fits well with the fact that for any $\omega\in \Lambda^\mathrm{top} V^*$, $|\omega|$ makes sense as an element of $\mathcal{D}_{V}$.
For 1-dimensional vector spaces $W_1$ and $W_2$, one has a canonical isomorphism $|W_1| \otimes |W_2| \cong |W_1\otimes W_2|$
%\[ |W_1| \otimes |W_2| \cong |W_1\otimes W_2|\]
(in particular $|W^*|\cong |W|^*$). From the properties of $\Lambda^\mathrm{top}V^*$ (or by similar arguments), one obtains canonical isomorphisms:
\begin{enumerate}
\item[1.] $\mathcal{D}_{V}^{*}\cong \mathcal{D}_{V^*}$ for any vector space $V$.
\item[2.] For any short exact sequence of vector spaces
\[ 0\rightarrow V\rightarrow U\rightarrow W\rightarrow 0\]
(e.g. for $U= V\oplus W$), one has an induced isomorphism $\mathcal{D}_{U}\cong \mathcal{D}_{V}\otimes \mathcal{D}_{W}$.
\end{enumerate}

Since the previous discussion is canonical (free of choices), it can applied (fiberwise) to vector bundles over a manifold $M$ so that, for any such vector bundle $E$, one can talk about the associated line bundles over $M$
\[ \mathcal{D}_E,\ \Lambda^\mathrm{top} E^*,\ \mathfrak{o}_{E}\]
and the previous isomorphisms continue to hold at this level. However, at this stage, only $\mathcal{D}_E$ is trivializable, and even that is in a non-canonical way.

\begin{definition} A \textbf{density on a manifold} $M$ is any section of the density bundle $\mathcal{D}_{TM}$. 
\end{definition}

The main point about densities on manifolds is that they can be integrated in a canonical fashion, so that associated to any compactly supported positive density $\rho$ on $M$, one obtains a Radon measure $\mu_{\rho}$  defined by \[ \mu_{\rho}: C_{c}^{\infty}(M)\rightarrow \mathbb{R},\ \ \ \ \mu_{\rho}(f)= \int_{M} f\cdot \rho .\]

%%%%%%%%%%%%%%%%%%%%%%%%%%%
%%%%%%%%%%%%%%%%%%%%%%%%%%%
%%%%%%%%%%%%%%%%%%%%%%%%%%%
%%%%%%%%%%%%%%%%%%%%%%%%%%%
%%%%%%%%%%%%%%%%%%%%%%%%%%%
\subsection{The transverse volume, orientation and density bundles}\label{sub-tr-dens-bundle}
%%%%%%%%%%%%%%%%%%%%%%%%%%%
%%%%%%%%%%%%%%%%%%%%%%%%%%%
%%%%%%%%%%%%%%%%%%%%%%%%%%%
%%%%%%%%%%%%%%%%%%%%%%%%%%%
%%%%%%%%%%%%%%%%%%%%%%%%%%%

\begin{definition}\label{def-tr-gpd-dens} For a Lie algebroid $A$ over $M$, the \textbf{the transverse density bundle of $A$} is the vector bundle over $M$ defined by: 
\[ \mathcal{D}_{A}^{\textrm{tr}}:= \mathcal{D}_{A^*}\otimes \mathcal{D}_{TM} .\]
\end{definition}

Similarly one can define the \textbf{transverse volume and orientation bundles}
\[ \mathcal{V}^{\textrm{tr}}_{A}:= \mathcal{V}_{A^*}\otimes \mathcal{V}_{TM}= \Lambda^{\textrm{top}}A\otimes \Lambda^{\textrm{top}}T^*M,\ \ \ \mathfrak{o}^{\textrm{tr}}_{A}:= \mathfrak{o}_{A^*}\otimes \mathfrak{o}_{TM}\]
and the usual relations between these bundles continue to hold in this setting; e.g.:
\[ \mathcal{D}_{A}^{\textrm{tr}}= |\Lambda^{\textrm{top}}A\otimes \Lambda^{\textrm{top}}T^*M|= |\mathcal{V}^{\textrm{tr}}_{A}| . \]

One of the main properties of these bundles is that they are representations of $A$ and, even better, of $\G$, whenever $\G$ is a Lie groupoid with algebroid $A$; hence they do deserve the name of ``transverse'' vector bundles. We describe the canonical action of $\G$ on the transverse density bundle $\mathcal{D}_{A}^{\textrm{tr}}$; for the other two the description is identical. We have to associate to any arrow $g: x\rightarrow y$ of $\G$ a linear transformation 
\[ g_{*}: \mathcal{D}_{A, x}^{\textrm{tr}}\rightarrow \mathcal{D}_{A, y}^{\textrm{tr}}.\]
The differential of $s$ and the right translations induce a short exact sequence

\[ 0 \rightarrow A_y \rightarrow T_g\G \stackrel{ds}{\rightarrow} T_{x}M \rightarrow 0 ,\]
which, in turn (cf. item 2 in Subsection \ref{sub-dens-bundle}), induces an isomorphism:  
\begin{equation}\label{DTG-dec} 
\mathcal{D}(T_g\G)\cong \mathcal{D}(A_y)\otimes \mathcal{D}(T_xM).
\end{equation}
Using the similar isomorphism at $g^{-1}$ and the fact that the differential of the inversion map gives an isomorphism $T_g\G\cong T_{g^{-1}}\G$, we find an isomorphism
\[ \mathcal{D}(A_y)\otimes \mathcal{D}(T_xM)\cong \mathcal{D}(A_x)\otimes \mathcal{D}(T_yM).\]
and therefore an isomorphism:
\[ \mathcal{D}(A_{x}^{*})\otimes \mathcal{D}(T_xM)\cong \mathcal{D}(A_{y}^{*})\otimes \mathcal{D}(T_yM),\]
and this defines the action $g_*$ we were looking for (it is straightforward to check that this defines indeed an action). 

\begin{definition}  A \textbf{transverse density} for the Lie groupoid $\G$ is any $\G$-invariant section of the transverse density bundle 
$\mathcal{D}_{A}^{\textrm{tr}}$.
\end{definition}

\begin{remark}Recall that the canonical integration of densities lets us associate to a compactly supported positive density $\rho$ on a manifold $M$ a Radon measure $\mu_\rho$. Similarly, a positive transverse density for a groupoid $\G$ gives rise to what is called a transverse measure for $\G$. Such measures were studied in \cite{measures_stacks}, and represent measures on the differentiable stack presented by $\G$.
\end{remark}

%%%%%%%%%%%%%%%%%%%%%%%%%%%%
%%%%%%%%%%%%%%%%%%%%%%%%%%%%%
%%%%%%%%%%%%%%%%%%%%%%%%%%%%
%%%%%%%%%%%%%%%%%%%%%%%%%%%%
%%%%%%%%%%%%%%%%%%%%%%%%%%%%
%%%%%%%%%%%%%%%%%%%%%%%%%%%%
\section{The modular class(es) revisited}\label{sec-The modular class(es) revisited}
%%%%%%%%%%%%%%%%%%%%%%%%%%%%
%%%%%%%%%%%%%%%%%%%%%%%%%%%%%
%%%%%%%%%%%%%%%%%%%%%%%%%%%%
%%%%%%%%%%%%%%%%%%%%%%%%%%%%
%%%%%%%%%%%%%%%%%%%%%%%%%%%%
%%%%%%%%%%%%%%%%%%%%%%%%%%%%

Throughout this section $\G$ is a Lie groupoid over $M$ and $A$ is its Lie algebroid. We will be using the transverse density bundle $\mathcal{D}_{A}^{\textrm{tr}}$, volume bundle $\mathcal{V}_{A}^{\textrm{tr}}$ and orientation bundle $\mathfrak{o}_{A}^{\textrm{tr}}$, viewed as representations of $\G$ as explained in Section \ref{sub-tr-dens-bundle}. Let us mention, right away, the relation between these bundles. As vector bundles over $M$, we know (see Section \ref{sub-dens-bundle}) that there are canonical vector bundle isomorphism between
\begin{itemize}
\item $\mathcal{D}_{A}^{\textrm{tr}}$ and $\mathcal{V}_{A}^{\textrm{tr}}\otimes \mathfrak{o}_{A}^{\textrm{tr}}$.
\item $\mathcal{V}_{A}^{\textrm{tr}}$ and $\mathcal{D}_{A}^{\textrm{tr}}\otimes \mathfrak{o}_{A}^{\textrm{tr}}$.
\item $\mathfrak{o}_{A}^{\textrm{tr}}\otimes \mathfrak{o}_{A}^{\textrm{tr}}$ and the trivial line bundle.
\item $\mathfrak{o}_{A}^{\textrm{tr}}$ and $(\mathfrak{o}_{A}^{\textrm{tr}})^*$.
\end{itemize}

\begin{lemma}\label{lemma-can-izos} All these canonical isomorphisms are isomorphisms of representations of $\G$ (where the trivial line bundle is endowed with the trivial action).  
\end{lemma}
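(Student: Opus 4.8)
The plan is to observe that all four isomorphisms listed are instances of the canonical isomorphisms from Subsection \ref{sub-dens-bundle} (namely $\mathcal{D}_V \otimes \mathfrak{o}_V \cong \Lambda^{\mathrm{top}}V^*$, $\mathcal{D}_V^* \cong \mathcal{D}_{V^*}$, $|W_1| \otimes |W_2| \cong |W_1 \otimes W_2|$, and $\mathcal{D}_U \cong \mathcal{D}_V \otimes \mathcal{D}_W$ for a short exact sequence), applied fiberwise and then tensored up. The key point is that the $\G$-action on each of $\mathcal{D}_A^{\textrm{tr}}$, $\mathcal{V}_A^{\textrm{tr}}$, $\mathfrak{o}_A^{\textrm{tr}}$ was \emph{built} in Section \ref{sub-tr-dens-bundle} by exactly the same recipe: to an arrow $g\colon x\to y$ one attaches the short exact sequence $0\to A_y \to T_g\G \xrightarrow{ds} T_xM \to 0$, applies the functor $\delta \mapsto L_\delta$ (with $\delta = |\det|$, $\det$, or $\mathrm{sign}\circ\det$ respectively), does the same at $g^{-1}$, and glues via $di\colon T_g\G \cong T_{g^{-1}}\G$. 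Since $L_\delta$ is a functor on the category of frame bundles and equivariant maps, and since all the structural isomorphisms of Subsection \ref{sub-dens-bundle} are natural with respect to isomorphisms of vector spaces, the diagram expressing compatibility of a given structural isomorphism with $g_*$ at $x$ and $g_*$ at $y$ commutes automatically.

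Concretely, I would proceed as follows. First, a preliminary remark: it suffices to check $\G$-equivariance pointwise, i.e. for a single arrow $g\colon x\to y$, that the relevant square commutes; smoothness in $g$ is inherited from smoothness of the original isomorphisms. Second, I would package the common mechanism: for each of the three homomorphisms $\delta$, denote by $\Phi^\delta_g$ the composite isomorphism $L_\delta(A_x^*)\otimes L_\delta(T_xM) \xrightarrow{\sim} L_\delta(A_y^*)\otimes L_\delta(T_yM)$ obtained from \eqref{DTG-dec} at $g$ and $g^{-1}$ together with $di$; this is the action $g_*$ on $\mathcal{D}_A^{\textrm{tr}}$, $\mathcal{V}_A^{\textrm{tr}}$, $\mathfrak{o}_A^{\textrm{tr}}$ respectively. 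Third, I would check the four isomorphisms one at a time. For $\mathfrak{o}_A^{\textrm{tr}}\otimes \mathfrak{o}_A^{\textrm{tr}}\cong \underline{\mathbb{R}}$ and $\mathfrak{o}_A^{\textrm{tr}}\cong (\mathfrak{o}_A^{\textrm{tr}})^*$ this is immediate because $(\mathrm{sign}\circ\det)^2 = 1$ and $\mathrm{sign}\circ\det$ is its own inverse, so the induced maps on the $L_\delta$-spaces respect these identities tautologically and the trivial action is recovered. For $\mathcal{D}_A^{\textrm{tr}}\cong \mathcal{V}_A^{\textrm{tr}}\otimes\mathfrak{o}_A^{\textrm{tr}}$ (and the symmetric statement) the content is that the fiberwise isomorphism $\mathcal{D}_V \otimes \mathfrak{o}_V \cong \Lambda^{\mathrm{top}}V^* = \mathcal{V}_V$, being natural in linear isomorphisms of $V$, intertwines the action built from $|\det|$ tensored with the one built from $\mathrm{sign}\circ\det$ on the left with the one built from $\det$ on the right — because $|\det| \cdot (\mathrm{sign}\circ\det) = \det$ as homomorphisms $GL_r\to\mathbb{R}^*$, and the construction of $g_*$ uses only these homomorphisms through the same short exact sequences.

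The only real thing to verify — and this is where I expect whatever mild friction there is — is that the isomorphism $\mathcal{D}_U \cong \mathcal{D}_V \otimes \mathcal{D}_W$ of item 2 in Subsection \ref{sub-dens-bundle} is natural with respect to \emph{morphisms of short exact sequences}, so that the two short exact sequences used to build $g_*$ (the one at $g$ over the pair $(A_y, T_xM)$ and the one at $g^{-1}$ over $(A_x, T_yM)$, linked by $di$ and by $dt, ds$) produce commuting squares. This is a diagram chase with frames: a splitting of $0\to V\to U\to W\to 0$ gives a frame of $U$ from frames of $V$ and $W$, and a morphism of short exact sequences carries such split frames to split frames compatibly, so the induced map on $L_\delta$-spaces is the tensor product of the induced maps — for any $\delta$ simultaneously. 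Granting this naturality (which is exactly the fiberwise content behind the statement that ``the previous isomorphisms continue to hold at the level of vector bundles''), all four equivariance statements follow formally, and I would simply assemble them. I would end by noting that the same argument verbatim gives equivariance of the analogous isomorphisms for the $A$-representations (the infinitesimal versions), should that be needed elsewhere.
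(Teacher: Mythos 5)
Your proposal is correct and follows essentially the same route as the paper: the paper packages the argument by axiomatizing ``good functors'' (compatible with duality and with short exact sequences) and ``good isomorphisms'' between them, which is precisely your observation that the $\G$-action is built by one and the same recipe from the short exact sequence at $g$ and $g^{-1}$, so that any canonical isomorphism natural with respect to duality and morphisms of short exact sequences is automatically equivariant. The only cosmetic difference is that you argue concretely via the homomorphisms $\delta\colon GL_r\to\mathbb{R}^*$ and split frames, while the paper states the abstraction and declares the remaining verification straightforward.
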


\begin{proof} Given the way that the action of $\G$ was defined (Section \ref{sub-tr-dens-bundle}), the direct check can be rather lengthy and painful. Here is a more conceptual approach. The main remark is that these actions can be defined in general, whenever we have a functor 
$F$ which associates to a vector space $V$ a $1$-dimensional vector space $F(V)$ and to a (linear) isomorphism $f: V\rightarrow W$ an isomorphism $F(f): F(V)\rightarrow F(W)$ such that:
\begin{enumerate}
\item[1.] $F$ commutes with the duality functor $D$, i.e., $F\circ D$ and $D\circ F$ are isomorphic through a natural transformation $\eta: F\circ D \rightarrow D\circ F$. 
\item[2.] for any exact sequence $0\rightarrow U\rightarrow V\rightarrow W\rightarrow 0$ there is an induced isomorphism between $F(V)$ and $F(U)\otimes F(W)$, natural in the obvious sense.
\end{enumerate}
Let's call such $F$'s ``good functors''. The construction from Section \ref{sub-tr-dens-bundle} shows that for any good functor $F$, 
\[ F_{A}^{\textrm{tr}}:= F(A^*)\otimes F(TM) \]
is a representation of $\G$. Given two good functors $F$ and $F'$, an isomorphism $\eta: F\rightarrow F'$ will be called good if it is compatible with the natural transformations from 1. and 2. above.  It is clear that, for any such $\eta$, there is an induced map 
$\eta^{\textrm{tr}}$ that is an isomorphism between $F_{A}^{\textrm{tr}}$ and $F_{A}^{'\, \textrm{tr}}$, as representations of $\G$. It should also be clear that, for any two good functors $F$ and $F'$, so is their tensor product. We see that we are left with proving that certain isomorphisms involving the functors $\mathcal{D}$, $\mathcal{V}$ and $\mathfrak{o}$ (e.g. $\mathcal{D}\cong \mathcal{V}\otimes \mathfrak{o}$) are good in the previous sense; and that is straightforward. 
\end{proof}

%%%%%%%%%%%%%%%%%%%%%%%%%%%
%%%%%%%%%%%%%%%%%%%%%%%%%%%
%%%%%%%%%%%%%%%%%%%%%%%%%%%
%%%%%%%%%%%%%%%%%%%%%%%%%%%
%%%%%%%%%%%%%%%%%%%%%%%%%%%
\subsection{The modular class of $\G$}\label{subsec-The modular class of G}
%%%%%%%%%%%%%%%%%%%%%%%%%%%
%%%%%%%%%%%%%%%%%%%%%%%%%%%
%%%%%%%%%%%%%%%%%%%%%%%%%%%
%%%%%%%%%%%%%%%%%%%%%%%%%%%
%%%%%%%%%%%%%%%%%%%%%%%%%%%

Let us concentrate on the question of whether $\G$ admits a strictly positive transverse density (these are the ``measures'' from the slogan at the start of the section, or ``geometric measures'' in the terminology of \cite{measures_stacks}). Start with any strictly positive section $\sigma$ of $\mathcal{D}_{A}^{\textrm{tr}}$. Then any other such section is of type $e^f \sigma$ for some $f\in C^{\infty}(M)$; moreover $e^f \sigma$ is invariant if and only if 
\[ e^{f(y)} \sigma(y)= e^{f(x)} g(\sigma(x))\]
for all $g: x\rightarrow y$ an arrow of $\G$. Considering 
\[ c_{\sigma}(g):= ln\left(\frac{\sigma(y)}{g(\sigma(x))}\right),\]
one has $c_{\sigma}\in C^{\infty}(\G)$ and one checks right away that it is a 1-cocycle, i.e., 
\[ c_{\sigma}(gh)= c_{\sigma}(g)+ c_{\sigma}(h) \]
for all $g$ and $h$ composable. The condition on $f$ that we were considering reads:
\[ f(x)- f(y)= c_{\sigma}(g)\]
for all $g:x\rightarrow y$, i.e., $c_{\sigma}= \delta(f)$ in the differentiable cohomology complex of $\G$,  $(C^{\bullet}_{\textrm{diff}}(\G), \delta)$. Furthermore, an easy check shows that the class $[c_{\sigma}]\in H^{1}_{\textrm{diff}}(\G)$ does not depend on the choice of $\sigma$. Therefore it gives rise to a canonical class
\[ \textrm{mod}(\G)\in H^{1}_{\textrm{diff}}(\G),\]
called \textbf{the modular class of the Lie groupoid $\G$}. By construction:

\begin{lemma}\label{mod-as-obstr} $\G$ admits a strictly positive transverse density iff $\textrm{mod}(\G)= 0$.
\end{lemma}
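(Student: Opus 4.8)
The plan is to unwind the construction of $\mathrm{mod}(\G)$ and observe that the proof is essentially already contained in the paragraph preceding the statement; the lemma is a direct reformulation, so the task is mainly to assemble the pieces carefully and track the logical equivalences.

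First I would fix a strictly positive section $\sigma$ of $\mathcal{D}_{A}^{\textrm{tr}}$ (such a section exists because $\mathcal{D}_{A}^{\textrm{tr}}$ is a density bundle, hence trivializable, and one can choose a positive trivialization). Every other strictly positive section is $e^{f}\sigma$ with $f \in C^{\infty}(M)$, and the discussion above shows that $e^{f}\sigma$ is $\G$-invariant precisely when $\delta(f) = c_{\sigma}$ in $C^{\bullet}_{\textrm{diff}}(\G)$. Hence $\G$ admits a strictly positive transverse density if and only if there exists $f$ with $\delta(f) = c_{\sigma}$, i.e. if and only if the cocycle $c_{\sigma}$ is a coboundary, i.e. if and only if $[c_{\sigma}] = 0$ in $H^{1}_{\textrm{diff}}(\G)$. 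Since $[c_{\sigma}] = \mathrm{mod}(\G)$ by definition and this class is independent of $\sigma$, this gives the ``only if'' and ``if'' directions simultaneously.

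The one point that deserves a word of care — and which I would single out as the only genuine (if minor) obstacle — is the reduction to \emph{strictly positive} sections: a priori a $\G$-invariant transverse density need not be comparable to $\sigma$ by an exponential factor unless it too is strictly positive and has the same "sign". This is handled by recalling that $\mathcal{D}_{A}^{\textrm{tr}} = |\mathcal{V}_{A}^{\textrm{tr}}|$ is a genuine density (not merely orientation-twisted volume) bundle, so "strictly positive" is an intrinsic, chart-independent notion, the strictly positive sections form an $\mathbb{R}_{>0}$-torsor fibrewise over $C^{\infty}(M)$, and the action of any arrow $g$ preserves positivity — this last fact follows from the explicit description of $g_{*}$ in Section \ref{sub-tr-dens-bundle}, since it is built out of the canonical isomorphisms for densities, which are positivity-preserving. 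Thus "admits a strictly positive transverse density" is exactly what the cocycle computation controls, with no loss.

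Finally I would simply state that combining these observations yields the equivalence: $\mathrm{mod}(\G) = 0 \iff c_{\sigma}$ is a coboundary $\iff$ some $e^{f}\sigma$ is $\G$-invariant $\iff$ $\G$ admits a strictly positive transverse density. I expect the proof to be two or three sentences long, essentially a pointer back to the construction, with perhaps one remark on why positivity causes no trouble.
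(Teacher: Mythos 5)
Your proposal is correct and follows essentially the same route as the paper, whose proof is simply the phrase ``by construction'' pointing back to the preceding paragraph: invariance of $e^f\sigma$ is equivalent to $c_\sigma=\delta(f)$, i.e.\ to $[c_\sigma]=\mathrm{mod}(\G)=0$. Your extra remark that the action $g_*$ preserves positivity (so that $c_\sigma(g)=\ln(\sigma(y)/g(\sigma(x)))$ is well defined and the cocycle computation really captures all strictly positive invariant sections) is a point the paper leaves implicit, and it is a worthwhile observation rather than a deviation.
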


The result makes precise the expectations of \cite[Section 7]{mehta-modular} for the meaning of the modular class in the absence of superorientability. With this, the existence of transverse densities and measures for proper Lie groupoids of \cite{measures_stacks} is just about the vanishing of differentiable cohomology of proper groupoids (Proposition $1$ in \cite{VanEst}).

The construction of $\textrm{mod}(\G)$ can be seen as a very particular case of the construction from \cite{VanEst} of characteristic classes of representations of $\G$, classes that live in the odd differentiable cohomology of $\G$. 

Here we are interested only in the $1$-dimensional representations $L$, with corresponding class denoted

\[ \theta_{\G}(L)\in H^{1}_{\textrm{diff}}(\G).\]
For a direct description, similar to that of $\textrm{mod}(\G)$, we first assume that $L$ is trivializable as a vector bundle and we choose a nowhere vanishing section $\sigma$. Then, for $g: x\rightarrow y$, we can write 
\[ g\cdot \sigma(x)= \tilde{c}_{\sigma}(g)  \sigma(x)\ \ \ \ (\tilde{c}_{\sigma}(g)\in \mathbb{R}^*)\]
and this defines a function 
 \begin{equation}\label{tilde-c-sigma} 
 \tilde{c}_{\sigma}: \G \rightarrow \mathbb{R}^* 
 \end{equation}
that is a groupoid homomorphism. The cocycle of interest is 
 \begin{equation}\label{tilde-c-sigma-form} 
c_{\sigma}=\textrm{ln}(|\tilde{c}_{\sigma}|): \G \rightarrow \mathbb{R};
\end{equation}
its cohomology class does not depend on the choice of $\sigma$ and defines $\theta_{\G}(L)$.  It is clear that for two such representations $L_1$ and $L_2$ (trivializable as vector bundles),
\begin{equation}\label{multiplicativiti-theta} 
\theta_{\G}(L_1\otimes L_2)= \theta_{\G}(L_1)+ \theta_{\G}(L_2).
\end{equation}
This indicates how to proceed for a general $L$: consider the representation $L\otimes L$ which is (noncanonically) trivializable and define:
\begin{equation}\label{multiplicativiti-theta-trick}  
\theta_{\G}(L):= \frac{1}{2} \theta_{\G}(L\otimes L).
\end{equation}
The multiplicativity formula for $\theta_{\G}$ remains valid for all $L_1$ and $L_2$. By construction:

\begin{lemma} One has $\textrm{mod}(\G)= \theta_{\G}(\mathcal{D}_{A}^{\textrm{tr}})$. 
\end{lemma}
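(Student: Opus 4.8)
The plan is to unwind the two constructions and to check that, for a suitable choice of section, they produce the very same differentiable $1$-cocycle; the only point requiring a moment of care is that the $\G$-action on $\mathcal{D}_{A}^{\textrm{tr}}$ preserves positivity, which is exactly what makes the absolute value in \eqref{tilde-c-sigma-form} harmless here.

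First I would note that $\mathcal{D}_{A}^{\textrm{tr}}=|\mathcal{V}_{A}^{\textrm{tr}}|$ is trivializable as a vector bundle---indeed it admits strictly positive global sections, obtained by gluing local positive densities with a partition of unity---so that $\theta_{\G}(\mathcal{D}_{A}^{\textrm{tr}})$ is computed directly by the description around \eqref{tilde-c-sigma}, without recourse to the doubling trick \eqref{multiplicativiti-theta-trick}. (Had one insisted on using \eqref{multiplicativiti-theta-trick}, the section $\sigma\otimes\sigma$ of $\mathcal{D}_{A}^{\textrm{tr}}\otimes\mathcal{D}_{A}^{\textrm{tr}}$ yields the cocycle $2\ln|\tilde{c}_\sigma|$, and halving gives back $\ln|\tilde{c}_\sigma|$, so the two routes agree, consistently with \eqref{multiplicativiti-theta}.)

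Next I would fix a strictly positive section $\sigma$ of $\mathcal{D}_{A}^{\textrm{tr}}$---the very section used to define $\textrm{mod}(\G)$---and observe that, for every arrow $g\colon x\to y$, the map $g_{*}\colon\mathcal{D}_{A,x}^{\textrm{tr}}\to\mathcal{D}_{A,y}^{\textrm{tr}}$ sends positive densities to positive densities. This is because, by its construction in Section~\ref{sub-tr-dens-bundle}, $g_{*}$ is a composite of the canonical isomorphisms of $1$-dimensional density spaces recalled in Section~\ref{sub-dens-bundle}---the decomposition \eqref{DTG-dec}, its analogue at $g^{-1}$, the isomorphism induced by the inversion $T_{g}\G\cong T_{g^{-1}}\G$, and $\mathcal{D}_{V}^{*}\cong\mathcal{D}_{V^{*}}$---each of which preserves the canonical positive cone. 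Hence the groupoid homomorphism $\tilde{c}_\sigma\colon\G\to\mathbb{R}^{*}$ of \eqref{tilde-c-sigma}, characterised by $g_{*}\sigma(x)=\tilde{c}_\sigma(g)\,\sigma(y)$, actually takes values in $\mathbb{R}_{>0}$, so that the cocycle representing $\theta_{\G}(\mathcal{D}_{A}^{\textrm{tr}})$ is
\[ \ln|\tilde{c}_\sigma(g)|=\ln\!\left(\frac{g_{*}\sigma(x)}{\sigma(y)}\right). \]
Comparing with $c_\sigma(g)=\ln\!\left(\sigma(y)/g(\sigma(x))\right)$, the cocycle used to define $\textrm{mod}(\G)$, one sees the two are the same function (possibly up to an overall sign, depending on the sign conventions adopted for the differential $\delta$ on $C^{\bullet}_{\textrm{diff}}(\G)$ and for $\tilde{c}_\sigma$). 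Since in both constructions the resulting class in $H^{1}_{\textrm{diff}}(\G)$ is independent of the choice of $\sigma$, this gives $\textrm{mod}(\G)=\theta_{\G}(\mathcal{D}_{A}^{\textrm{tr}})$.

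The only genuinely substantive step is the positivity claim above, and it is the one I would spell out with care; beyond carefully matching the two definitions I do not expect any real obstacle.
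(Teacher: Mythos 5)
Your proof is correct and follows exactly the route the paper intends: the paper offers no argument beyond ``By construction,'' and your unwinding of the two cocycles for a strictly positive section $\sigma$, together with the observation that the action $g_{*}$ on $\mathcal{D}_{A}^{\textrm{tr}}$ preserves the positive cone (so that $\tilde{c}_{\sigma}>0$ and the absolute value in \eqref{tilde-c-sigma-form} is vacuous), is precisely the content being elided. The residual ``overall sign'' you flag is only an artifact of the paper's slightly inconsistent conventions in \eqref{tilde-c-sigma} versus the definition of $c_{\sigma}$ in Subsection \ref{subsec-The modular class of G}, and does not affect the identification of the classes.
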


\begin{remark}[a warning] \label{remark-warning} It is not true (even if $L$ is trivializable as a vector bundle!) that $\theta_{\G}(L)$ is the obstruction to $L$ being isomorphic to the trivial representation. Lemma \ref{mod-as-obstr} holds because the transverse density bundle is more than trivializable: one can also talk about positivity of sections of $\mathcal{D}_{A}^{\textrm{tr}}$ and $\mathcal{D}_{A}^{\textrm{tr}}$ is trivializable as an {\it oriented} representation of $\G$.
\end{remark}

The tendency in existing literature, at least for the infinitesimal version of the modular class (see below), is to use simpler representations instead of $\mathcal{D}_{A}^{\textrm{tr}}$. Here we would like to clarify the role of the transverse volume bundle $\mathcal{V}_{A}^{\textrm{tr}}$: can one use it to define $\textrm{mod}(\G)$? In short, the answer is: yes, but one should not do it because it would give rise to the wrong expectations (because of the previous warning!). We summarise this into the following:

\begin{proposition}\label{not-col-tr} For any Lie groupoid $\G$, $\textrm{mod}(\G)= \theta_{\G}(\mathcal{V}_{A}^{\textrm{tr}})$. However, it is not true that 
that $\textrm{mod}(\G)= 0$ happens if and only if $\G$ admits a transverse volume form (i.e., a nowhere vanishing $\G$-invariant  section of $\mathcal{V}_{A}^{\textrm{tr}}$).
\end{proposition}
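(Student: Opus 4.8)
The plan is to prove the two assertions separately. The first, $\mathrm{mod}(\G)=\theta_{\G}(\mathcal{V}_{A}^{\textrm{tr}})$, I would deduce from the multiplicativity formula \eqref{multiplicativiti-theta} together with Lemma \ref{lemma-can-izos}. Indeed, since we have a canonical isomorphism of representations $\mathcal{V}_{A}^{\textrm{tr}}\cong \mathcal{D}_{A}^{\textrm{tr}}\otimes \mathfrak{o}_{A}^{\textrm{tr}}$, multiplicativity gives $\theta_{\G}(\mathcal{V}_{A}^{\textrm{tr}})=\theta_{\G}(\mathcal{D}_{A}^{\textrm{tr}})+\theta_{\G}(\mathfrak{o}_{A}^{\textrm{tr}})$, so it suffices to show $\theta_{\G}(\mathfrak{o}_{A}^{\textrm{tr}})=0$. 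But $\mathfrak{o}_{A}^{\textrm{tr}}\otimes\mathfrak{o}_{A}^{\textrm{tr}}$ is canonically isomorphic to the trivial representation (again Lemma \ref{lemma-can-izos}), so by \eqref{multiplicativiti-theta-trick}, $\theta_{\G}(\mathfrak{o}_{A}^{\textrm{tr}})=\tfrac12\theta_{\G}(\mathfrak{o}_{A}^{\textrm{tr}}\otimes\mathfrak{o}_{A}^{\textrm{tr}})=0$. Combined with the preceding lemma, $\mathrm{mod}(\G)=\theta_{\G}(\mathcal{D}_{A}^{\textrm{tr}})=\theta_{\G}(\mathcal{V}_{A}^{\textrm{tr}})$.

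For the second assertion I would exhibit an explicit counterexample: a Lie groupoid $\G$ with $\mathrm{mod}(\G)=0$ but admitting no nowhere vanishing $\G$-invariant section of $\mathcal{V}_{A}^{\textrm{tr}}$. The natural place to look is among groupoids where $\mathcal{V}_{A}^{\textrm{tr}}$ is not even trivializable as a vector bundle, or where the obstruction to $\G$-invariance is orientation-theoretic rather than modular. The cleanest candidate is a compact Lie group $G$ acting on a point, or more transparently the action groupoid $\mathbb{Z}/2\ltimes M \tto M$ where $\mathbb{Z}/2$ acts on $M$ in such a way that $\mathcal{V}^{\textrm{tr}}$ (which as a vector bundle is $\Lambda^{\mathrm{top}}TM$, up to a twist, since $A=0$) carries a non-orientable invariance condition: the point is that $\mathcal{D}^{\textrm{tr}}_A$ always admits an invariant positive section when $\G$ is proper (so $\mathrm{mod}(\G)=0$), while an invariant section of $\mathcal{V}^{\textrm{tr}}_A$ additionally requires the action to preserve orientation, i.e. requires $\mathfrak{o}^{\textrm{tr}}_A$ to admit an invariant section, which fails as soon as some arrow acts by an orientation-reversing map. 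Concretely, take $M=\mathbb{R}$ with $\mathbb{Z}/2$ acting by $x\mapsto -x$, or $M$ a circle with a reflection; then $\mathrm{mod}(\G)=0$ by properness and Lemma \ref{mod-as-obstr}, but the generator acts on the fiber of $\mathcal{V}^{\textrm{tr}}_A$ over the fixed point by $-1$, so no invariant volume form exists.

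The key steps, in order: (1) reduce the first claim to $\theta_{\G}(\mathfrak{o}^{\textrm{tr}}_A)=0$ via multiplicativity; (2) kill that term using that $\mathfrak{o}^{\textrm{tr}}_A$ squares to the trivial representation and the definition \eqref{multiplicativiti-theta-trick}; (3) choose the counterexample groupoid and verify properness, hence $\mathrm{mod}(\G)=0$ by Lemma \ref{mod-as-obstr}; (4) compute the action of the relevant arrow on the fiber of $\mathcal{V}^{\textrm{tr}}_A$ at a fixed point and observe it is $-1$, obstructing any nonvanishing invariant section. I expect the main obstacle to be step (4): one must be careful about how the groupoid action on $\mathcal{V}^{\textrm{tr}}_A=\Lambda^{\mathrm{top}}A\otimes\Lambda^{\mathrm{top}}T^*M$ is actually computed from the construction in Section \ref{sub-tr-dens-bundle} — in particular tracking the identification $\mathcal{V}(T_g\G)\cong\mathcal{V}(A_y)\otimes\mathcal{V}(T_xM)$ and the inversion isomorphism, and confirming no spurious signs cancel the orientation-reversal — and to present the example so that this computation is manifestly a one-line check rather than a lengthy unwinding of definitions, perhaps by invoking the functoriality packaged in the proof of Lemma \ref{lemma-can-izos}.
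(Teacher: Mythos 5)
Your proof of the first assertion is exactly the paper's: reduce via \eqref{multiplicativiti-theta} and the isomorphism $\mathcal{V}_{A}^{\textrm{tr}}\cong\mathcal{D}_{A}^{\textrm{tr}}\otimes\mathfrak{o}_{A}^{\textrm{tr}}$ of Lemma \ref{lemma-can-izos} to showing $\theta_{\G}(\mathfrak{o}_{A}^{\textrm{tr}})=0$, and then kill that class using \eqref{multiplicativiti-theta-trick} together with the triviality of $\mathfrak{o}_{A}^{\textrm{tr}}\otimes\mathfrak{o}_{A}^{\textrm{tr}}$ as a representation. For the counterexample you diverge from the paper, which takes a non-orientable manifold $M$ viewed as the unit groupoid: there $A=0$, every section is automatically invariant, $\mathrm{mod}(\G)=0$ because $\mathcal{D}_{TM}$ always admits a positive section, and the failure is simply that $\Lambda^{\mathrm{top}}T^*M$ is not trivializable as a vector bundle --- no groupoid action needs to be computed at all. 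Your example $\mathbb{Z}_2\ltimes\mathbb{R}$ (reflection through the origin) is also correct and is complementary: there $\mathcal{V}_{A}^{\textrm{tr}}=T^*\mathbb{R}$ \emph{is} trivializable as a vector bundle, $\mathrm{mod}(\G)=0$ because averaging over $\mathbb{Z}_2$ produces an invariant positive density (so Lemma \ref{mod-as-obstr} applies), and the obstruction is genuinely the invariance: an invariant section $\sigma$ must satisfy $\sigma(0)=-\sigma(0)$ at the fixed point, since the composite $T_0\mathbb{R}\to T_0\mathbb{R}$ built from $ds$, right translation and inversion reduces, for an action groupoid of a discrete group (where $A=0$ collapses the exact sequence), to $d_0(x\mapsto -x)=-1$; so your worry about spurious signs in step (4) evaporates. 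The paper's example is shorter and exhibits the obstruction as non-trivializability of the underlying line bundle; yours shows that even when the bundle is trivializable, $\theta_{\G}(L)=0$ does not yield an invariant nowhere-vanishing section --- which is precisely the content of the warning in Remark \ref{remark-warning}, so it is arguably the more instructive of the two.
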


Counterexamples for the last part are provided already by manifolds $M$, viewed as groupoids with unit arrows only.
Indeed, in this case the associated transverse (density, volume) bundles are the usual bundles of $M$; hence the modular class is zero even if $M$ is not orientable. For the first part of the proposition, using the multiplicativity (\ref{multiplicativiti-theta}) of $\theta_{\G}$ and the canonical isomorphisms discussed at the beginning of the section, we have to show that
\begin{equation}\label{vanish}
\theta_{\G}(\mathfrak{o}_{A}^{\textrm{tr}})= 0.
\end{equation}
In turn, this follows by applying again the multiplicativity of $\theta_{\G}$ and the canonical isomorphism between  $\mathfrak{o}_{A}^{\textrm{tr}}\otimes \mathfrak{o}_{A}^{\textrm{tr}}$ and the trivial representation.

%%%%%%%%%%%%%%%%%%%%%%%%%%%
%%%%%%%%%%%%%%%%%%%%%%%%%%%
%%%%%%%%%%%%%%%%%%%%%%%%%%%
%%%%%%%%%%%%%%%%%%%%%%%%%%%
%%%%%%%%%%%%%%%%%%%%%%%%%%%
\subsection{The modular class of $A$}\label{subsec-The modular class of A} 
%%%%%%%%%%%%%%%%%%%%%%%%%%%
%%%%%%%%%%%%%%%%%%%%%%%%%%%
%%%%%%%%%%%%%%%%%%%%%%%%%%%
%%%%%%%%%%%%%%%%%%%%%%%%%%%
%%%%%%%%%%%%%%%%%%%%%%%%%%%
The construction of the modular class of a Lie algebroid $A$, introduced by Evens, Lu and Weinstein \cite{QA}, 
 is based on the geometry of a certain $1$ - dimensional representation $Q_A$ of the Lie algebroid $A$: $\textrm{mod}(A)$ is the characteristic class of $Q_A$. 
 Let us first recall the construction of the characteristic class $\theta_{A}(L)\in H^1(A)$ associated to any $1$-dimensional representation $L$ of $A$ (the infinitesimal version of the construction of the classes $\theta_{\G}(L)$ of groupoid representations). First one uses the analogue of (\ref{multiplicativiti-theta-trick}) to reduce the construction to the case when $L$  is trivializable as a vector bundle; then, for such $L$, one chooses a nowhere vanishing section $\sigma$ and one writes the infinitesimal action $\nabla$ of $A$ on $L$ as
\[ \nabla_{\alpha}(\sigma)= c_{\sigma}(\alpha) \cdot \sigma,\]
therefore defining $c_{\sigma}$ as an element $c_{\sigma}(L)\in \Omega^1(A)$. 
Similar to the previous discussion, the flatness of $\nabla$ implies that $c_{\sigma}(L)$ is a closed $A$-form and its cohomology class does not depend on the choice of $\sigma$; therefore it defines a class, called the characteristic class of $L$, and denoted
\[ \theta_A(L)\in H^1(A).\]
Note that the situation is simpler than at the level of $\G$: for $L$ trivializable as a vector bundle, $\theta_A(L)= 0$ if and only if $L$ is isomorphic to the trivial representation of $A$ (compare with the warning from Remark \ref{remark-warning}!). 

Inspired by the previous subsection, we define:

\begin{definition}
The \textbf{modular class of a Lie algebroid} $A$, denoted 
$\textrm{mod}(A)$, is the characteristic class of $\mathcal{D}_{A}^{\textrm{tr}}$.
\end{definition}

When $A$ is the Lie algebroid of a Lie groupoid $\cG$, since 
$\mathcal{D}_{A}^{\textrm{tr}}$ is a representation of $\cG$, we deduce (cf. Theorem 7 in \cite{VanEst}):

\begin{proposition}\label{mod-cor1} For any Lie groupoid $\G$, the Van Est map in degree $1$,
\[ VE: H^{1}_{\mathrm{diff}}(\G) \rightarrow H^1(A)\]
sends $\textrm{mod}(\G)$ to $\textrm{mod}(A)$. In particular, if $A$ is integrable by a unimodular Lie groupoid (e.g. by a proper Lie groupoid), then its modular class vanishes.
\end{proposition}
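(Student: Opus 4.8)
The plan is to deduce both assertions from the compatibility of the Van Est map with the characteristic classes $\theta$. By definition $\textrm{mod}(A)= \theta_A(\mathcal{D}_{A}^{\textrm{tr}})$, and we have already recorded that $\textrm{mod}(\G)= \theta_{\G}(\mathcal{D}_{A}^{\textrm{tr}})$, so the first part of the statement amounts to
\[ VE\big(\theta_{\G}(\mathcal{D}_{A}^{\textrm{tr}})\big)= \theta_A(\mathcal{D}_{A}^{\textrm{tr}}), \]
which in turn is the particular case $L= \mathcal{D}_{A}^{\textrm{tr}}$ of the general identity $VE\circ \theta_{\G}= \theta_A$ on $1$-dimensional representations of $\G$ --- Theorem $7$ of \cite{VanEst}. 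Note that to even write down $\theta_{\G}(\mathcal{D}_{A}^{\textrm{tr}})$ one needs $\mathcal{D}_{A}^{\textrm{tr}}$ to be a genuine representation of $\G$, which it is (Section \ref{sub-tr-dens-bundle}); this is precisely the point at which working with $\mathcal{D}_{A}^{\textrm{tr}}$ rather than $Q_A$ matters.

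If one prefers a self-contained argument for the identity $VE(\theta_{\G}(L))= \theta_A(L)$, I would first use that $VE$ is a linear map on cohomology, together with the multiplicativity relations \eqref{multiplicativiti-theta}, \eqref{multiplicativiti-theta-trick} and their algebroid analogues, to reduce to the case in which $L$ is trivializable as a vector bundle (if the identity holds for $L\otimes L$, divide by two). For such $L$ fix a nowhere vanishing section $\sigma$; the groupoid side produces the cocycle $c_{\sigma}= \textrm{ln}(|\tilde{c}_{\sigma}|)\in C^{1}_{\textrm{diff}}(\G)$ of \eqref{tilde-c-sigma-form}, while the algebroid side produces $c_{\sigma}(L)\in \Omega^1(A)$ determined by $\nabla_{\alpha}\sigma= c_{\sigma}(\alpha)\sigma$. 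Since the infinitesimal action $\nabla$ is obtained by differentiating the $\G$-action at the units along the $s$-fibers, and since that is exactly what the degree-$1$ Van Est map does to a differentiable $1$-cocycle, a direct computation gives $VE(c_{\sigma})= c_{\sigma}(L)$ already at the level of cochains, hence the claimed equality in cohomology.

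The ``in particular'' is then immediate. If $A$ is the Lie algebroid of a unimodular Lie groupoid $\G$, i.e. $\textrm{mod}(\G)= 0$, then $\textrm{mod}(A)= VE(\textrm{mod}(\G))= 0$. Moreover every proper Lie groupoid is unimodular: by Proposition $1$ of \cite{VanEst} its differentiable cohomology vanishes in positive degrees, so $\textrm{mod}(\G)\in H^{1}_{\textrm{diff}}(\G)$ is automatically zero (equivalently, such a $\G$ admits a strictly positive transverse density by \cite{measures_stacks}, and one invokes Lemma \ref{mod-as-obstr}).

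The only real work lies in the first paragraph's appeal to $VE\circ \theta_{\G}= \theta_A$: making it precise means recalling the Van Est cochain map and checking that it carries the groupoid cocycle $c_{\sigma}$ to the algebroid cocycle $c_{\sigma}(L)$ --- the $1$-dimensional, degree-$1$ shadow of Theorem $7$ of \cite{VanEst}. Once that is granted, the reduction to trivializable $L$, the linearity of $VE$, and the unimodular/proper corollary are all formal.
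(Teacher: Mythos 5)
Your proposal is correct and follows the same route as the paper: the paper proves this proposition simply by noting that $\mathcal{D}_{A}^{\mathrm{tr}}$ is a genuine representation of $\G$ and invoking Theorem 7 of \cite{VanEst} (compatibility of the Van Est map with characteristic classes of representations), with the proper case handled via Proposition 1 of \cite{VanEst}. Your additional cochain-level sketch of $VE(c_{\sigma})= c_{\sigma}(L)$ is a reasonable unpacking of that citation but is not needed for, nor present in, the paper's argument.
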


In particular, since the Van Est map in degree 1 is injective if $\cG$ is $s$-connected (see e.g. Theorem 4 in \cite{VanEst}) we deduce:

\begin{corollary}\label{mod-cor2} If $\cG$ is an $s$-connected Lie group with Lie algebroid $A$, then $\textrm{mod}(A)$ is the obstruction to the existence of a strictly positive transverse density of $\G$.
\end{corollary}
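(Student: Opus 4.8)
The plan is to deduce this purely formally from the two preceding results together with the injectivity of the Van Est map. First I would invoke Lemma \ref{mod-as-obstr}, which already tells us that $\G$ admits a strictly positive transverse density if and only if $\textrm{mod}(\G)= 0$ in $H^1_{\textrm{diff}}(\G)$. So the only thing left to establish is that, for an $s$-connected $\G$, the condition $\textrm{mod}(\G)= 0$ is equivalent to $\textrm{mod}(A)= 0$ in $H^1(A)$.

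For that equivalence I would use Proposition \ref{mod-cor1}, according to which the degree $1$ Van Est map $VE\colon H^1_{\textrm{diff}}(\G)\rightarrow H^1(A)$ satisfies $VE(\textrm{mod}(\G))= \textrm{mod}(A)$. One implication, namely $\textrm{mod}(\G)= 0 \Rightarrow \textrm{mod}(A)= 0$, is then immediate since $VE$ is a linear map. For the converse I would appeal to the fact, recalled just before the statement, that $VE$ in degree $1$ is injective whenever $\G$ is $s$-connected (e.g.\ Theorem 4 in \cite{VanEst}); hence $\textrm{mod}(A)= VE(\textrm{mod}(\G))= 0$ forces $\textrm{mod}(\G)= 0$. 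Combining this with Lemma \ref{mod-as-obstr} gives: $\G$ admits a strictly positive transverse density $\iff \textrm{mod}(\G)= 0 \iff \textrm{mod}(A)= 0$, which is exactly the assertion that $\textrm{mod}(A)$ is the obstruction to the existence of such a density.

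There is essentially no technical obstacle here; the content has all been packaged into the earlier results, and the proof is a one-line chain of equivalences. The only point worth a moment's care is that $s$-connectedness is used precisely (and only) to upgrade the tautological implication coming from functoriality of $VE$ to a genuine equivalence via injectivity; without it one would merely get that vanishing of $\textrm{mod}(\G)$ is sufficient, not necessary, for $\textrm{mod}(A)= 0$.
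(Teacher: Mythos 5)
Your argument is correct and is exactly the one the paper intends: combine Lemma \ref{mod-as-obstr} with Proposition \ref{mod-cor1} and the injectivity of the degree $1$ Van Est map for $s$-connected groupoids to get $\textrm{mod}(A)=0 \iff \textrm{mod}(\G)=0 \iff$ existence of a strictly positive transverse density. Nothing is missing.
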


%%%%%%%%%%%%%%%%%%%%%%%%%%%
%%%%%%%%%%%%%%%%%%%%%%%%%%%
%%%%%%%%%%%%%%%%%%%%%%%%%%%
%%%%%%%%%%%%%%%%%%%%%%%%%%%
%%%%%%%%%%%%%%%%%%%%%%%%%%%
\subsection{(Not) $Q_A$}\label{subsec-not QA} 
%%%%%%%%%%%%%%%%%%%%%%%%%%%
%%%%%%%%%%%%%%%%%%%%%%%%%%%
%%%%%%%%%%%%%%%%%%%%%%%%%%%
%%%%%%%%%%%%%%%%%%%%%%%%%%%
%%%%%%%%%%%%%%%%%%%%%%%%%%%
The modular class of a Lie algebroid $A$ can be defined as the characteristic class of various $1$-dimensional representations of $A$. We have used $\mathcal{D}_{A}^{\textrm{tr}}$, but the common choice in the literature (starting with \cite{QA}) is the line bundle 
\[ Q_{A}= \Lambda^{\textrm{top}}A\otimes |\Lambda^{\textrm{top}}T^*M| .\]
The infinitesimal action of $A$ on $Q_A$ is explained in \cite{QA}; equivalently, one writes 
\begin{equation}\label{dec-QA} 
Q_A= \mathcal{D}_{A}^{\textrm{tr}}\otimes \mathfrak{o}_A
\end{equation}
in which both terms are representations of $A$: $\mathcal{D}_{A}^{\textrm{tr}}$ was already discussed, while $\mathfrak{o}_A$ is a representation of $A$ since it is a flat vector bundle over $M$. 

\begin{lemma}\label{mod-cor0}   The representations $Q_A$, $\mathcal{D}_{A}^{\mathrm{tr}}$ and $\mathcal{V}_{A}^{\mathrm{tr}}$ of $A$ have the same characteristic class (namely $\mathrm{mod}(A)$).
\end{lemma}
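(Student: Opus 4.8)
The plan is to exploit the multiplicativity of the characteristic class $\theta_A$ for $1$-dimensional representations, exactly as in the proof of Proposition \ref{not-col-tr}, together with the three decompositions
\[
Q_A = \mathcal{D}_{A}^{\textrm{tr}}\otimes \mathfrak{o}_A,\qquad
\mathcal{D}_{A}^{\textrm{tr}} = \mathcal{V}_{A}^{\textrm{tr}}\otimes \mathfrak{o}_{A}^{\textrm{tr}},\qquad
\mathcal{V}_{A}^{\textrm{tr}} = \mathcal{D}_{A}^{\textrm{tr}}\otimes \mathfrak{o}_{A}^{\textrm{tr}},
\]
the first being \eqref{dec-QA} and the other two being the canonical isomorphisms of $A$-representations obtained by applying Lemma \ref{lemma-can-izos} at the infinitesimal level (they are in any case instances of the general ``good functor'' formalism, now read as flat $A$-connections rather than $\G$-actions). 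Since $\textrm{mod}(A)$ is by definition $\theta_A(\mathcal{D}_{A}^{\textrm{tr}})$, the whole statement reduces to the two identities $\theta_A(\mathfrak{o}_A)=0$ and $\theta_A(\mathfrak{o}_{A}^{\textrm{tr}})=0$.

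First I would record the infinitesimal multiplicativity $\theta_A(L_1\otimes L_2)=\theta_A(L_1)+\theta_A(L_2)$ for all $1$-dimensional $A$-representations, which follows from the local formula $\nabla_\alpha(\sigma_1\otimes\sigma_2)=(\nabla_\alpha\sigma_1)\otimes\sigma_2+\sigma_1\otimes(\nabla_\alpha\sigma_2)$ together with the reduction trick \eqref{multiplicativiti-theta-trick} for non-trivializable factors; this is the exact infinitesimal analogue of \eqref{multiplicativiti-theta}. Next I would prove $\theta_A(\mathfrak{o}_A)=0$: the bundle $\mathfrak{o}_A$ carries a canonical flat connection (it is flat as a vector bundle over $M$ and the $A$-action is the one induced by the anchor), and $\mathfrak{o}_A\otimes\mathfrak{o}_A$ is canonically isomorphic to the trivial representation, so by multiplicativity $2\theta_A(\mathfrak{o}_A)=\theta_A(\mathfrak{o}_A\otimes\mathfrak{o}_A)=0$, hence $\theta_A(\mathfrak{o}_A)=0$. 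The identical argument, using the canonical isomorphism between $\mathfrak{o}_{A}^{\textrm{tr}}\otimes\mathfrak{o}_{A}^{\textrm{tr}}$ and the trivial $A$-representation (again from the good-functor picture, as in \eqref{vanish}), gives $\theta_A(\mathfrak{o}_{A}^{\textrm{tr}})=0$.

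Assembling: from \eqref{dec-QA}, $\theta_A(Q_A)=\theta_A(\mathcal{D}_{A}^{\textrm{tr}})+\theta_A(\mathfrak{o}_A)=\theta_A(\mathcal{D}_{A}^{\textrm{tr}})=\textrm{mod}(A)$; and from $\mathcal{V}_{A}^{\textrm{tr}}=\mathcal{D}_{A}^{\textrm{tr}}\otimes\mathfrak{o}_{A}^{\textrm{tr}}$, $\theta_A(\mathcal{V}_{A}^{\textrm{tr}})=\theta_A(\mathcal{D}_{A}^{\textrm{tr}})+\theta_A(\mathfrak{o}_{A}^{\textrm{tr}})=\textrm{mod}(A)$. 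This completes the proof. The only point requiring any care — and the closest thing to an obstacle — is checking that the decompositions $\mathcal{D}_{A}^{\textrm{tr}}\cong\mathcal{V}_{A}^{\textrm{tr}}\otimes\mathfrak{o}_{A}^{\textrm{tr}}$ and $\mathfrak{o}_{A}^{\textrm{tr}}\otimes\mathfrak{o}_{A}^{\textrm{tr}}\cong\underline{\mathbb{R}}$ are isomorphisms of $A$-representations and not merely of vector bundles over $M$; but this is precisely the infinitesimal shadow of Lemma \ref{lemma-can-izos}, proved there by the good-functor argument, so no new work is needed. Alternatively, one may bypass $\mathfrak{o}_A$ and $\mathfrak{o}_{A}^{\textrm{tr}}$ altogether by comparing $Q_A\otimes Q_A$, $\mathcal{D}_{A}^{\textrm{tr}}\otimes\mathcal{D}_{A}^{\textrm{tr}}$ and $\mathcal{V}_{A}^{\textrm{tr}}\otimes\mathcal{V}_{A}^{\textrm{tr}}$, all of which are canonically isomorphic as $A$-representations since the orientation factors square to the trivial bundle, and then dividing by $2$.
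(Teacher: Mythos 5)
Your proposal is correct and follows essentially the same route as the paper: reduce via the decompositions $Q_A=\mathcal{D}_{A}^{\textrm{tr}}\otimes\mathfrak{o}_A$ and $\mathcal{D}_{A}^{\textrm{tr}}\cong\mathcal{V}_{A}^{\textrm{tr}}\otimes\mathfrak{o}_{A}^{\textrm{tr}}$ and the multiplicativity of $\theta_A$ to showing $\theta_A(\mathfrak{o}_A)=0$ and $\theta_A(\mathfrak{o}_{A}^{\textrm{tr}})=0$, which follow because the squares of the orientation factors are trivial representations. The paper's proof is exactly this argument, so no further comparison is needed.
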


\begin{proof} Using $\mathcal{D}_{A}^{\textrm{tr}}\cong \mathcal{V}_{A}^{\textrm{tr}}\otimes \mathfrak{o}_{A}^{\textrm{tr}}$, (\ref{dec-QA}) and the multiplicativity of $\theta_{A}$, it suffices to show that $\theta_A(\mathfrak{o}_{A}^{\textrm{tr}})= 0$ and similarly for $\mathfrak{o}_{A}$. Using again multiplicativity, it suffices to show that $\theta_A(\mathfrak{o}_{A}^{\textrm{tr}}\otimes \mathfrak{o}_{A}^{\textrm{tr}})= 0$ - which is true because $\mathfrak{o}_{A}^{\textrm{tr}}\otimes \mathfrak{o}_{A}^{\textrm{tr}}$ is isomorphic to the trivial representation (Lemma \ref{lemma-can-izos}); and similarly for $\mathfrak{o}_{A}$ just that, this time, $\mathfrak{o}_{A}\otimes \mathfrak{o}_{A}$ is isomorphic to the trivial line bundle already as a flat vector bundle.
\end{proof}

Despite the previous lemma, because of Proposition \ref{not-col-tr} and the discussion around it, using $\mathcal{V}_{A}^{\mathrm{tr}}$ to define $\textrm{mod}(A)$, although correct, may give rise to the wrong expectations. However, using $Q_A$ to define $\textrm{mod}(A)$ is even more unfortunate, for even more fundamental reasons: in general, $Q_A$ is not a representation of the groupoid $\G$! Indeed, using (\ref{dec-QA}), the condition that $Q_A$ can be made into a representation of $\G$ is equivalent to the same condition for $\mathfrak{o}_A$. But the orientation bundles 
$\mathfrak{o}_A$ are the typical examples of algebroid representations that do not come from groupoid ones. That is clear already in the case of the pair groupoid of a manifold $M$, whose representations are automatically trivial as vector bundles, but for which $\mathfrak{o}_{A}= \mathfrak{o}_{TM}$ is not trivializable if $M$ is not orientable. 

Note also that the fact that $\mathcal{D}_{A}^{\textrm{tr}}$, unlike $Q_A$, is a representation of $\G$, was absolutely essential for obtaining Proposition \ref{mod-cor1} and Corollary \ref{mod-cor2}.

%%%%%%%%%%%%%%%%%%%%%%%%%%%
%%%%%%%%%%%%%%%%%%%%%%%%%%%
%%%%%%%%%%%%%%%%%%%%%%%%%%%
%%%%%%%%%%%%%%%%%%%%%%%%%%%
%%%%%%%%%%%%%%%%%%%%%%%%%%%
\subsection{Transverse orientability and the first Stiefel-Whitney class of $\G$} 
%%%%%%%%%%%%%%%%%%%%%%%%%%%
%%%%%%%%%%%%%%%%%%%%%%%%%%%
%%%%%%%%%%%%%%%%%%%%%%%%%%%
%%%%%%%%%%%%%%%%%%%%%%%%%%%
%%%%%%%%%%%%%%%%%%%%%%%%%%%
It is interesting to look back at the construction of the characteristic class $\theta_{\G}(L)$ of a $1$-dimensional representation $L$ of $\G$. The reason for the warning mentioned in Remark \ref{remark-warning} comes from the fact that, when passing from $\tilde{c}_{\sigma}$ to $c_{\sigma}$ in (\ref{tilde-c-sigma-form}), one loses information related to orientability. 

Following the exposition in \cite{tese}, we return to the discussion around  (\ref{tilde-c-sigma-form}); in particular, we assume that $L$ is a $1$-dimensional representation of $\G$ that is trivializable as a vector bundle, and $\sigma$ is a nowhere vanishing section of $L$. Then one can either:
\newpage
\begin{itemize}
\item consider the entire $\tilde{c}_{\sigma}: \G \rightarrow \mathbb{R}^*$
\item consider only the part of $\tilde{c}_{\sigma}$ that is not contained in $c_{\sigma}$, i.e., 
\[ \textrm{sign} \circ \tilde{c}_{\sigma}: \G \rightarrow \mathbb{Z}_2,\]
where we identify $\mathbb{Z}_2$ with $\{-1, 1\}\subset \mathbb{R}^*$.
\end{itemize}
Both of them are (differentiable) cocyles on $\G$ with coefficients in a (abelian Lie) group. Such cocycles give rise to classes in the cohomology groups 
$H^{1}_{\textrm{diff}}(\G, \mathbb{R}^*)$ and $H^{1}_{\textrm{diff}}(\G, \mathbb{Z}_2)$, which are abelian groups but no longer vector spaces. As before, the resulting cohomology classes are independent of $\sigma$; we denote them by
\[ \tilde{\theta}_{\G}(L)\in H^{1}_{\textrm{diff}}(\G, \mathbb{R}^*), \ \ w(L)\in H^{1}_{\textrm{diff}}(\G, \mathbb{Z}_2),\]
and we will call them the \textbf{extended characteristic class} of $L$, and the \textbf{Stiefel-Whitney class of $L$}, respectively. All these classes can be put together using the decomposition of the group $\mathbb{R}^*$ as
\[ \mathbb{R}^* \cong \mathbb{R} \times \mathbb{Z}_2,\ \ \lambda \mapsto (\textrm{ln}(|\lambda|), \textrm{sign}(\lambda))\]
and the induced isomorphism 
\[ H^{1}_{\textrm{diff}}(\G, \mathbb{R}^*) \cong H^{1}_{\textrm{diff}}(\G) \times H^{1}_{\textrm{diff}}(\G, \mathbb{Z}_2) .\]
With this, the extended class of $L$ is 
\[ \tilde{\theta}_{\G}(L)= (\theta_{\G}(L), w(L)). \]
Note that, by construction, $\tilde{\theta}_{\G}(L)$ is trivial (equal to the identity of the group) if and only if $L$ is isomorphic to the trivial representation, and $w(L)$ is trivial if and only if $L$ is $\G$-orientable, i.e., if $L$ admits an orientation with the property that the action of $\G$ is orientation-preserving. A warning however: the classes $\tilde{\theta}_{\G}(L)$ and $w(L)$ have been defined so far only when $L$ is trivializable as a vector bundle; moreover, these constructions cannot be extended to general $L$'s while preserving their main properties. 
Hence, when it comes to the canonical representations of $\G$, one can apply them only to $\mathcal{D}_{A}^{\textrm{tr}}$, for which one obtains
\[ w(\mathcal{D}_{A}^{\textrm{tr}})= 1, \ \ \tilde{\theta}_{\G}(\mathcal{D}_{A}^{\textrm{tr}})= (\textrm{mod}(\G), 1).\]
For the previous discussion we assumed that $L$ was trivializable as a vector bundle. 
To handle general $L$'s one can use covers $\mathcal{U}$ of $M$ by open subsets over which $L$ is trivializable. Such an open cover induces a groupoid $\G_{\mathcal{U}}$ over the disjoint union of the open subsets in $\mathcal{U}$, obtained by pulling-back $\G$ along the canonical map from the disjoint union into $M$. The pull-back $L_{\mathcal{U}}$ of $L$ is a representation of 
$\G_{\mathcal{U}}$ and, by the choice of $\mathcal{U}$, one has well-defined classes
\[ \tilde{\theta}(L_{\mathcal{U}})= (\theta(L_{\mathcal{U}}), w(L_{\mathcal{U}}))\in H^{1}_{\textrm{diff}}(\G_{\mathcal{U}}, \mathbb{R}^*)\cong H^{1}_{\textrm{diff}}(\G_{\mathcal{U}})\times H^{1}_{\textrm{diff}}(\G_{\mathcal{U}}, \mathbb{Z}_2).\]
Note that, since $\G_{\mathcal{U}}$ is Morita equivalent to $\G$ (see Example $5.10$ in \cite{ieke_mrcun}), when passing from $L$ to $L_{\mathcal{U}}$ (as representations) one does not lose any information. To obtain a class that is independent of the covers one proceeds as usual and one passes to the filtered colimit (with respect to the refinement of covers) and defines
\[ \check{\mathrm{H}}^{1}_{\textrm{diff}}(\G, \mathbb{R}^*)= \lim_{\to \mathcal{U}} H^{1}_{\textrm{diff}}(\G_{\mathcal{U}}, \mathbb{R}^*),\]
and similarly $\check{\mathrm{H}}^{1}_{\textrm{diff}}(\G, \mathbb{Z}_2)$. The Morita invariance of differentiable cohomology with coefficients in $\mathbb{R}$ implies that the restriction to open subsets induces an isomorphism
\[  H^{1}_{\textrm{diff}}(\G)\cong H^{1}_{\textrm{diff}}(\G_{\mathcal{U}})\]
(that sends $\theta_{\G}(L)$ to $\theta_{\G_{\mathcal{U}}}(L_{\mathcal{U}})$); hence there are induced cohomology classes
\[ \check{\theta}_{\G}(L)\in \check{\mathrm{H}}^{1}_{\textrm{diff}}(\G, \mathbb{R}^*), \ \ w(L)\in \check{\mathrm{H}}^{1}_{\textrm{diff}}(\G, \mathbb{Z}_2)\]
and canonical isomorphism of groups 
\[  \check{\mathrm{H}}^{1}_{\textrm{diff}}(\G, \mathbb{R}^*) \cong H^{1}_{\textrm{diff}}(\G)\times \check{\mathrm{H}}^{1}_{\textrm{diff}}(\G, \mathbb{Z}_2)\]
such that:
\begin{itemize}
\item $\check{\theta}_{\G}(L)= (\theta_{\G}(L), w(L))$.
\item $\check{\theta}_{\G}(L)$ is trivial if and only if $L$ is isomorphic to the trivial representation.
\end{itemize}
Actually,  $\check{\theta}_{\G}$ gives an isomorphism between the group $\textrm{Rep}^1(\G)$ of isomorphism classes of $1$-dimensional representations (with the tensor product) with the \v{C}ech-type cohomology with coefficients in $\mathbb{R}^*$:
\begin{equation}\label{tautol-class} 
\check{\theta}_{\G}: \textrm{Rep}^1(\G) \stackrel{\sim}{\rightarrow} \check{\mathrm{H}}^{1}_{\textrm{diff}}(\G, \mathbb{R}^*).
\end{equation}
Denoting $\mathbb{R}^*= GL_1(\mathbb{R})$ by $H$, this is a particular case of the interpretation of $\G$-equivariant principal $H$-bundles in terms of transition functions (see for example \cite{husemoller}), valid for any Lie group $H$, interpretation that is itself at the heart of Haefliger's work on the transverse geometry of foliations \cite{haefliger}. While $w$ is trivial on $\mathcal{D}_{A}^{\textrm{tr}}$, it gives rise to interesting information when applied to the transverse volume bundle or, equivalently, to the orientation one. %(the two classes coincide because of the multiplicativity of $w$). 

\begin{definition} The \textbf{transverse first Stiefel-Whitney class} of $\G$ is: %defined as 
\[ w^{\textrm{tr}}_{1}(\G):= w(\mathcal{V}_{A}^{\textrm{tr}})= w(\mathfrak{o}_{A}^{\textrm{tr}})\in \check{\mathrm{H}}^{1}_{\textrm{diff}}(\G, \mathbb{Z}_2).\]
\end{definition}

As a consequence of the previous discussion we state here the following:

\begin{corollary} One has:
\begin{enumerate}
\item[1.] $\G$ is transversely orientable iff $w^{\mathrm{tr}}_{1}(\G)= 1$.
\item[2.] $\G$ admits transverse volume forms iff $\mathrm{mod}(\G)= 0$ and $w^{\mathrm{tr}}_{1}(\G)= 1$.
\end{enumerate}
\end{corollary}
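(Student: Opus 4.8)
The plan is to derive both statements directly from the machinery already assembled, treating the two items separately but drawing on the same identifications.

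For item 1, I would start from the definition $w^{\textrm{tr}}_{1}(\G) = w(\mathcal{V}_{A}^{\textrm{tr}}) = w(\mathfrak{o}_{A}^{\textrm{tr}})$ and unwind what ``$\G$ is transversely orientable'' means, namely that $\mathcal{V}_{A}^{\textrm{tr}}$ (equivalently $\mathfrak{o}_{A}^{\textrm{tr}}$) admits a $\G$-invariant orientation. By the discussion preceding the definition, $w(L)$ for a $1$-dimensional representation $L$ (constructed via the \v{C}ech colimit over covers trivializing $L$) is trivial precisely when $L$ is $\G$-orientable, i.e. admits an orientation preserved by the action of $\G$. Since being $\G$-orientable for $\mathcal{V}_{A}^{\textrm{tr}}$ is exactly the definition of $\G$ being transversely orientable, item 1 is immediate from the displayed characterization ``$w(L)$ is trivial if and only if $L$ is $\G$-orientable''. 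I should just spell out that $w$ is well defined on $\mathcal{V}_{A}^{\textrm{tr}}$ and $\mathfrak{o}_{A}^{\textrm{tr}}$ via the cover construction, and that the equality $w(\mathcal{V}_{A}^{\textrm{tr}}) = w(\mathfrak{o}_{A}^{\textrm{tr}})$ follows from the canonical isomorphism $\mathcal{V}_{A}^{\textrm{tr}} \cong \mathcal{D}_{A}^{\textrm{tr}} \otimes \mathfrak{o}_{A}^{\textrm{tr}}$ of representations (Lemma \ref{lemma-can-izos}), multiplicativity of $w$, and $w(\mathcal{D}_{A}^{\textrm{tr}}) = 1$ as already observed.

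For item 2, admitting a transverse volume form means $\mathcal{V}_{A}^{\textrm{tr}}$ has a nowhere-vanishing $\G$-invariant section, i.e. $\mathcal{V}_{A}^{\textrm{tr}}$ is isomorphic to the trivial representation of $\G$. By the property of the extended class (as stated after (\ref{tautol-class})), $\check{\theta}_{\G}(\mathcal{V}_{A}^{\textrm{tr}})$ is trivial if and only if $\mathcal{V}_{A}^{\textrm{tr}}$ is isomorphic to the trivial representation, and under the decomposition $\check{\theta}_{\G}(L) = (\theta_{\G}(L), w(L))$ this happens iff both $\theta_{\G}(\mathcal{V}_{A}^{\textrm{tr}}) = 0$ and $w(\mathcal{V}_{A}^{\textrm{tr}}) = 1$. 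Now $\theta_{\G}(\mathcal{V}_{A}^{\textrm{tr}}) = \textrm{mod}(\G)$ by Proposition \ref{not-col-tr}, and $w(\mathcal{V}_{A}^{\textrm{tr}}) = w^{\textrm{tr}}_{1}(\G)$ by definition, giving exactly the claimed equivalence.

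The only genuine subtlety — and the place I would be most careful — is the passage between ``$\mathcal{V}_{A}^{\textrm{tr}}$ admits a nowhere-vanishing $\G$-invariant section'' and the \v{C}ech-type triviality of $\check{\theta}_{\G}(\mathcal{V}_{A}^{\textrm{tr}})$, since $\mathcal{V}_{A}^{\textrm{tr}}$ need not be trivializable as a vector bundle to begin with; here one must invoke the cover-based construction and the Morita invariance used to define the \v{C}ech groups, so that ``isomorphic to the trivial representation'' is the honest obstruction captured by $\check{\theta}_{\G}$. This is precisely the point the preceding subsection was built to handle, so the argument is really just assembling those pieces; no new calculation is needed.
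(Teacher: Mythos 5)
Your proposal is correct and follows exactly the route the paper intends: the corollary is stated there ``as a consequence of the previous discussion,'' and that discussion consists precisely of the pieces you assemble --- the characterization of $w(L)=1$ as $\G$-orientability for item 1, and for item 2 the triviality criterion for $\check{\theta}_{\G}(L)$ together with the decomposition $\check{\theta}_{\G}(L)=(\theta_{\G}(L), w(L))$ and the identity $\theta_{\G}(\mathcal{V}_{A}^{\textrm{tr}})=\mathrm{mod}(\G)$ from Proposition \ref{not-col-tr}. Your attention to the cover-based (\v{C}ech) construction for the non-trivializable case is exactly the point the preceding subsection was set up to handle, so nothing is missing.
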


%%%%%%%%%%%%%%%%%%%%%%
%%%%%%%%%%%%%%%%%%%%%%
%%%%%%%%%%%%%%%%%%%%%%
%%%%%%%%%%%%%%%%%%%%%%
%%%%%%%%%%%%%%%%%%%%%%
%%%%%%%%%%%%%%%%%%%%%%
\noindent \textbf{Acknowledgements}
%%%%%%%%%%%%%%%%%%%%%%
%%%%%%%%%%%%%%%%%%%%%%
%%%%%%%%%%%%%%%%%%%%%%
%%%%%%%%%%%%%%%%%%%%%%
%%%%%%%%%%%%%%%%%%%%%%
%%%%%%%%%%%%%%%%%%%%%%

This research was supported by the NWO Vici Grant no. 639.033.312. The second author was supported also by the FCT grant  SFRH/BD/71257/2010 under the POPH/FSE programmes. We would also like to acknowledge various discussions with Rui Loja Fernandes, Ioan M\unichar{259}rcu\unichar{539} and David Mart\'inez Torres.

\bibliography{mybiblio}

\providecommand{\bysame}{\leavevmode\hbox to3em{\hrulefill}\thinspace}
\providecommand{\MR}{\relax\ifhmode\unskip\space\fi MR }
% \MRhref is called by the amsart/book/proc definition of \MR.
\providecommand{\MRhref}[2]{%
  \href{http://www.ams.org/mathscinet-getitem?mr=#1}{#2}
}
\providecommand{\href}[2]{#2}
\begin{thebibliography}{1}

\bibitem{VanEst}
M.~Crainic, \emph{Differentiable and algebroid cohomology, van {E}st
  isomorphisms, and characteristic classes}, Comment. Math. Helv. \textbf{78}
  (2013), no.~4, 681--721.

\bibitem{measures_stacks}
M.~Crainic and J.~N. Mestre, \emph{Measures on differentiable stacks}, J.
  Noncommut. Geom \textbf{13} (2019), no.~4, 1235--1294.

\bibitem{QA}
S.~Evens, J.-H. Lu, and A.~Weinstein, \emph{Transverse measures, the modular
  class and a cohomology pairing for {L}ie algebroids}, Quart. J. Math. Oxford
  Ser. (2) \textbf{50} (1999), no.~200, 417--436.

\bibitem{haefliger}
A.~Haefliger, \emph{Homotopy and integrability}, Manifolds--{A}msterdam 1970
  ({P}roc. {N}uffic {S}ummer {S}chool), Lecture Notes in Mathematics, Vol. 197,
  Springer, Berlin, 1971, pp.~133--163.

\bibitem{husemoller}
D.~Husemoller, \emph{Fibre bundles}, third ed., Graduate Texts in Mathematics,
  vol.~20, Springer-Verlag, New York, 1994.

\bibitem{mehta-modular}
R.~A. Mehta, \emph{Modular classes of {L}ie groupoid representations up to
  homotopy}, SIGMA Symmetry Integrability Geom. Methods Appl. \textbf{11}
  (2015).

\bibitem{tese}
J.~N. Mestre, \emph{Differentiable stacks: stratifications, measures and
  deformations}, Ph.D. thesis, Utrecht {U}niversity, 2016.

\bibitem{ieke_mrcun}
I.~Moerdijk and J.~Mr{\v{c}}un, \emph{Introduction to foliations and {L}ie
  groupoids}, Cambridge Studies in Advanced Mathematics, vol.~91, Cambridge
  University Press, Cambridge, 2003.

\end{thebibliography}
\bibliographystyle{amsplain}

\end{document}